\documentclass[11pt]{article}

\usepackage[margin=1in]{geometry}
\usepackage{amsmath,amssymb,amsthm,mathtools}
\usepackage{bm}
\usepackage{booktabs}
\usepackage{graphicx}
\usepackage{hyperref}
\usepackage{xcolor}

\graphicspath{{figures/}{../figures/}}

\hypersetup{
  colorlinks=true,
  linkcolor=blue!55!black,
  citecolor=blue!55!black,
  urlcolor=blue!55!black
}

\newtheorem{proposition}{Proposition}
\newtheorem{lemma}{Lemma}
\newtheorem{remark}{Remark}

\newcommand{\OmegaD}{\Omega}
\newcommand{\bd}{\partial\Omega}
\newcommand{\ip}[2]{\left\langle #1,#2\right\rangle}
\newcommand{\norm}[1]{\left\lVert #1\right\rVert}
\newcommand{\R}{\mathbb{R}}

\title{Data generated internal solutions for the plasma wave equation: error bounds and numerical experiments in two dimensions}
\author{V.~Druskin\thanks{Department of Mathematical Sciences, Worcester Polytechnic Institute, Worcester, MA and Department of Mathematics, Southern Methodist University, Dallas, TX (\texttt{vdruskin1@gmail.com}).}
\and S.~Moskow\thanks{Department of Mathematics, Drexel University, Philadelphia, PA (\texttt{slm84@drexel.edu}).}
\and M.~Zaslavsky\thanks{Department of Mathematics, Southern Methodist University, Dallas, TX (\texttt{mzaslavskiy@smu.edu}).}}
\date{\today}

\begin{document}

\maketitle

\begin{abstract}
We consider the computation of internal solutions for a time domain plasma wave equation with an unknown potential $q$ from boundary response data.  The internal solutions are computed by transforming known background snapshots using the Cholesky decomposition of the data-driven Gramian, or mass matrix.  It was recently shown that in one dimension these data generated internal solutions converge in $L^2$ at order $\sqrt{\tau}$ for well chosen initial waves.  Here we study the internal solution reconstruction in two dimensions, where a multiple input/multiple output (MIMO) setup and a block Gramian are needed, with the number of boundary sources increasing as the time sampling is refined.  We show that the general error bound carries over to this setting: the distance between the data generated solutions and the best causal approximation from background snapshots is controlled by the best approximation mass matrix mismatch.  Numerical refinement studies on a square domain measure the convergence of the data generated solutions alongside the best causal approximation from the background snapshots, with the relative errors appearing to go to zero at rate $\sqrt{\tau}$, and the absolute errors going to zero in $L^1$. We also show that the solution reconstructions remain accurate for high contrast composite media.  Finally, since the mass matrix assembled from noisy response data can fail to be positive definite, regularization is needed; comparing a diagonal shift with an eigenvalue floor, we find the floor more robust, with reconstructions that remain more accurate than the unperturbed background field with noise that is up to ten percent of the root mean square data amplitude.
\end{abstract}

\section{Introduction}
\label{sec:intro}

In this paper we consider the computation of time domain wavefields in the interior of a medium that can be probed only from its boundary.  The model is the plasma wave equation
\begin{equation}
  \label{eq:plasma-intro}
  u_{tt}-\Delta u + q(x)u=0
\end{equation}
on a bounded domain, with the potential $q$ unknown, and we ask how accurately the interior snapshots generated by boundary-localized sources can be recovered from boundary response measurements alone.  The motivation comes from the inverse scattering problem of determining $q$ itself, which has an extensive literature, see for example \cite{CaCoMo,rakesh1988linearised,0266-5611-30-6-065005,bube1983one,Virieux2016AnIT}.  We consider a  closely related problem of recovering a Schr\"odinger potential from boundary data for multidimensional Schrodinger or plasma wave equations in overdetermined formulation, see \cite{novikov1988multidimensional}  and references wherein. This  overdetermined generalization of Marchenko-Gelfand-Levitan inverse spectral method, that was possibly introduced by Faddev in 1950s,  treats operator-valued spectral data, corresponding to $2d-1$ data sets in $d$-dimensional problems. Algorithms with access to interior fields can sidestep much of the nonlinearity of the inverse coefficient problem, and some approaches construct an approximate interior solution as an intermediate step for exactly this reason \cite{Kepley2016GeneratingVI}.

A particular route to interior data is provided by data-driven reduced order models (ROMs).  From evenly sampled boundary responses one can assemble, without knowing $q$, the exact Gramian of the unknown interior snapshots, and hence a small projected system consistent with the measurements; see \cite{doi:10.1137/1.9781611974829.ch7} for the model reduction framework and \cite{DrMaThZa,BoDrMaZa,BoDrMaZa2,BoDrMaZa3,DrMaZa} for inversion algorithms built on it.  The step at the center of the present paper was first introduced in \cite{BoDrMaMoZa}: transforming known background snapshots by the Cholesky factors of the measured Gramian yields remarkably accurate approximations of the unknown interior fields.  These data generated fields can then be inserted into the Lippmann--Schwinger integral equation to recover $q$ \cite{DrMoZa,DrMoZa2,DrMoZa3,DrMoZa4,BoGaMaZi}, and the quality of the resulting images is tied directly to their accuracy.  Much of this literature treats settings with additional difficulties: variable wave speeds  \cite{BoDrMaZa2,BoDrMaZa3,DrMaZa,BoGaMaZi}, diffusion problems \cite{baker2025regularized}, and monostatic or partially nonreciprocal data \cite{DrMoZa2,DrMoZa3,DrMoZa4}.  Here, as in \cite{DrMoZa6}, we deliberately work in the simplest setting that exhibits the field reconstruction step, the plasma wave equation with a full multiple input/multiple output array and only the potential unknown, so that the accuracy of the internal solutions can be studied without these complications.

In \cite{DrMoZa6}, we isolated this internal solution generation step for the plasma wave equation and studied it as a stand-alone problem with exact data.  There we showed that the data generated internal solutions are asymptotically close to the projection of the true solution onto the space of background snapshots, and we proved convergence in $L^2$ of order $\sqrt{\tau}$ in one dimension for two examples of initial waves.  As discussed in \cite{DrMoZa6}, in higher dimensions a single input/single output (SISO) background subspace will not generally be rich enough for convergence, and a multiple input/multiple output (MIMO) formulation is necessary, with the number of boundary sources increasing as the time sampling is refined: the time samples resolve the fields in depth, and the sources resolve them laterally along the boundary.  The purpose of the present paper is to formulate and test this MIMO reconstruction step in two dimensions, including for noisy data.  In exact arithmetic the reconstruction is a change of coordinates between two snapshot systems.  In computations, and especially with noisy data, the mass matrix may lose positive definiteness or become severely ill conditioned, so regularization is essential; regularized versions of this construction are known to be stable in practice \cite{BoDrMaZa3,baker2025regularized,DrMoZa3}.

In this work we state the formulation of the internal snapshot reconstruction for the time domain plasma wave equation, describe the block Gramian construction for multiple boundary sources and receivers, show that the general error bound of \cite{DrMoZa6} relating the data generated solutions to the best causal approximation carries over to this setting, and compare two regularizations of the noisy Gramian: a diagonal shift, which is a Tikhonov type regularization \cite{engl1996regularization}, and an eigenvalue floor, related to the nearest positive semidefinite matrix \cite{higham1988computing}.  The numerical experiments, performed with finite difference solvers, indicate that the eigenvalue floor stabilizes the reconstructions under response noise.  We also present two dimensional refinement studies motivated by the one dimensional $\sqrt{\tau}$ convergence rate of \cite{DrMoZa6}, together with a heuristic best approximation discussion of the background snapshot family that is consistent with the observed rates. The considered formulation is overdetermined -- we consider matrix-valued data corresponding to the $2d-1$ dimensional data sets in the $d$-dimensional continuous case. Our approach can be reduced to a non-overdetermined formulation, with only diagonals of the data matrix available in the framework of the data-completed Lippmann-Schwinger-Lanczos inversion algorithm \cite{DrMoZa4}.

The paper is organized as follows.  In section~\ref{sec:setup} we describe the forward model and the boundary data.  In section~\ref{sec:construction} we describe the mass matrix, its construction from the data, and the data generated internal solutions, including the block form needed for the MIMO setting.  In section~\ref{sec:bound} we state and prove a general error bound of \cite{DrMoZa6}, which carries over to this setting and bounds the distance between the data generated solutions and the best causal approximation of the true solutions by the mismatch of their mass matrices.  Section~\ref{sec:noise} contains the noisy data model and the two regularizations.  In section~\ref{sec:numerics} we show numerical experiments, and section~\ref{sec:discussion} contains a discussion of limitations and future work.

\section{Problem setup}
\label{sec:setup}

Let $\OmegaD\subset\R^d$, $d=1$ or $2$, be a bounded domain with boundary $\bd$.  We consider
\begin{equation}
  \label{eq:plasma-wave}
  \begin{cases}
    u_{tt}(x,t)-\Delta u(x,t)+q(x)u(x,t)=0,
      & x\in\OmegaD,\quad 0<t<T,\\
    {\partial u\over{\partial\nu}}= 0 ,
      & x\in\bd,\quad 0<t<T,\\
    u(x,0)=g_m(x),\qquad u_t(x,0)=0,
      & x\in\OmegaD,
  \end{cases}
\end{equation}
where $q$ is the unknown plasma potential and $m=1,\ldots,n_b$ indexes the incident field.  Here, as in \cite{DrMoZa6}, the source is implemented as a localized initial displacement near the boundary, with homogeneous Neumann boundary conditions.  This produces the same algebraic data structure as a boundary source after discretization: source and receiver functions are localized near $\bd$, and the measured response is an inner product against receiver profiles. Dirichlet boundary conditions or initial velocity conditions could be implemented similarly. 

For receiver profile $r_\ell$, source $g_m$, and sample times $t_j=j\tau$, define the response matrix samples
\begin{equation}
  \label{eq:response}
  F_{\ell m}(t_j)
    = \int_{\OmegaD} r_\ell(x)\,u_m(x,t_j)\,dx.
\end{equation}
The background medium is denoted by $q_0$, here $q_0=0$, and its corresponding snapshots are $u^0_m(\cdot,t_j)$.  These background snapshots are computable since $q_0$ is known.  The problem we consider in this manuscript is: given the response samples $\{F_{\ell m}(t_j)\}$, reconstruct the internal snapshots $u_m(\cdot,t_j)$ in the unknown medium.

\section{Construction of the internal solutions from the data}
\label{sec:construction}

Consider the true snapshots
\[
  U_{j,m}(x)=u_m(x,j\tau),\qquad
  U^0_{j,m}(x)=u^0_m(x,j\tau).
\]
An essential component of the construction is the Gramian, or mass matrix, of the snapshots.  It is well known that the mass matrix can be obtained from the boundary data; this construction is quite general for linear problems \cite{gosea2022data}.  For a SISO problem, $n_b=1$, and the cosine propagator identity gives
\begin{equation}
  \label{eq:siso-gramian}
  M_{ij}
  =
  \ip{U_i}{U_j}_{L^2(\OmegaD)}
  =
  \frac{1}{2}\left(F(|i-j|\tau)+F((i+j)\tau)\right),
\end{equation}
see \cite{DrMoZa6} and the references therein.  For multiple sources and receivers, the same identity is applied blockwise, see for example \cite{BoGaMaZi}.  Define the block Gramian
\begin{equation}
  \label{eq:block-gramian}
  M_{(i,\ell),(j,m)}
  =
  \ip{U_{i,\ell}}{U_{j,m}}_{L^2(\OmegaD)}
  =
  \frac{1}{2}\left(
    F_{\ell m}(|i-j|\tau)
    +
    F_{\ell m}((i+j)\tau)
  \right).
\end{equation}
Thus $M$ is obtained from measured response data alone.  Likewise, the background mass matrix $M_0$ is obtained either by direct quadrature of the background snapshots or by the same response identity in the known background.

\begin{remark}
Equation \eqref{eq:block-gramian} is the central identity that links the boundary data to the interior.  The measured traces do not directly reveal pointwise internal values, but they determine all pairwise inner products among the finite snapshot family.  Reconstruction is therefore a Gramian matching problem, and Proposition~\ref{prop:matching} below makes the characterization precise, as in \cite{DrMoZa6}.
\end{remark}

Let $U\in\R^{N\times P}$ be the matrix whose rows are the unknown snapshots $U_{j,m}$ sampled on the spatial grid, and let $U_0\in\R^{N\times P}$ contain the corresponding background snapshots.  Here $N=n_t n_b$, where $n_t$ is the number of time samples, $n_b$ is the number of boundary sources, and $P$ is the number of spatial grid points.  The rows are ordered \emph{time-major}: row $i=jn_b+m$ holds the snapshot of source $m$ at time step $j$, so the time index is the outer, slower one, and the $n_b$ snapshots sharing a time step form a consecutive block.  With $W$ the diagonal matrix of spatial quadrature weights, the mass matrices are
\begin{equation}
  \label{eq:mass-matrices}
  M = UWU^\top,\qquad M_0=U_0WU_0^\top .
\end{equation}

We remark on a notational difference from \cite{DrMoZa6}.  There the snapshots were treated as row vectors of functions in $L^2(\OmegaD)$ and inner products were written as integrals; here we work instead with vectors of point values on the spatial grid, and the weight matrix $W$ carries the integration, so that $UWU^\top$ is the quadrature approximation of the $L^2$ Gramian.  The two formulations are equivalent up to quadrature error, and all statements below hold verbatim in the function setting with $W$ absorbed into the inner product.  We prefer the discrete formulation here because it makes explicit each step of the algorithm: the mass matrix $M$ is assembled from the measured responses alone through \eqref{eq:block-gramian}, the background snapshots $U_0$ and the weights $W$ are computed from the known background medium, and the true snapshots $U$ never enter the reconstruction.  They appear only in the analysis and in the evaluation of errors in the synthetic experiments.

The exact mass matrices are Gramians of the snapshot families and are therefore symmetric positive definite whenever the families are linearly independent, which we assume throughout.  We compute their Cholesky decompositions
\begin{equation}
  \label{eq:cholesky}
  M=LL^\top,\qquad M_0=L_0L_0^\top;
\end{equation}
these decompositions were first used to solve the coefficient inverse problem in \cite{DrMaThZa}.  The data generated internal solutions used in this work are then
\begin{equation}
  \label{eq:reconstruction}
  \mathbf{U}
  =
  L L_0^{-1} U_0 ,
\end{equation}
or, equivalently, the numerically stable triangular-solve form used in the code,
\[
  \mathbf{U}
  =
  T^\top U_0,\qquad
  L_0^\top T=L^\top .
\]
The reconstructed row $\mathbf{U}_{j,m}$ approximates $u_m(\cdot,j\tau)$ in the unknown medium.

The matrix $M$ has a natural block structure: each time-time entry is a $K\times K$ source/receiver block, where $K=n_b$.  Therefore one could factor $M$ by block Cholesky, recursively forming and factoring the $K\times K$ Schur-complement pivots associated with each time step.  As discussed in \cite{DrMoZa6}, the block Cholesky decomposition involves a choice of square root of the positive definite diagonal blocks, and is therefore not unique.  In the computations reported here we just use the full Cholesky factorization of the regularized block matrix $M$, which is algebraically equivalent to choosing Cholesky as the square root in block Cholesky. 

\begin{proposition}[Exact matching]
\label{prop:matching}
The data generated family $\mathbf{U}$ in \eqref{eq:reconstruction} has the same mass matrix as the true snapshots,
\[
  \mathbf{U}W\mathbf{U}^\top=M,
\]
and it is the unique family of the form $SU_0$, with $S\in\R^{N\times N}$ lower triangular with positive diagonal entries, that has this property.
\end{proposition}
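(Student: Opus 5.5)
The plan is a direct computation for the matching identity, followed by uniqueness of the Cholesky factorization. For the first claim, substitute $\mathbf{U}=LL_0^{-1}U_0$ and use $M_0=U_0WU_0^\top=L_0L_0^\top$ from \eqref{eq:mass-matrices} and \eqref{eq:cholesky}:
\[
  \mathbf{U}W\mathbf{U}^\top
  = L L_0^{-1}\,(U_0WU_0^\top)\,L_0^{-\top}L^\top
  = L L_0^{-1}L_0L_0^\top L_0^{-\top}L^\top
  = LL^\top = M .
\]
This requires only that $L_0$ be invertible. That holds because the background snapshot family is assumed linearly independent, so $M_0$ is symmetric positive definite. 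The family $\mathbf{U}$ is of the stated form with $S=LL_0^{-1}$. This $S$ is lower triangular, since inverses and products of lower triangular matrices are lower triangular. Its diagonal entries are $L_{ii}/(L_0)_{ii}>0$.

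For uniqueness, take any lower triangular $S$ with positive diagonal satisfying $SU_0W(SU_0)^\top=M$. Then
\[
  M = S M_0 S^\top = (SL_0)(SL_0)^\top .
\]
The factor $SL_0$ is lower triangular with diagonal entries $S_{ii}(L_0)_{ii}>0$. So this is a Cholesky factorization of the symmetric positive definite matrix $M$. By uniqueness of the Cholesky factorization with positive diagonal, $SL_0=L$, hence $S=LL_0^{-1}$ and $SU_0=\mathbf{U}$.

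The only step needing care is the Cholesky uniqueness fact, and I would recall its standard argument. If $A A^\top = BB^\top$ with $A,B$ lower triangular, positive diagonal, and invertible, then $B^{-1}A=B^\top A^{-\top}$. The left side is lower triangular and the right side is upper triangular, so both equal a diagonal matrix $D$ with positive entries. The relation $DD^\top=I$ then forces $D=I$.

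One point deserves a remark. Uniqueness is asserted for the matrix $S$ rather than merely for the product $SU_0$. These coincide because $U_0$ has full row rank, which is again the linear independence assumption. So uniqueness holds at both levels.
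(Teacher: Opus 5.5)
Your proof is correct and follows the paper's argument. You verify the matching identity by direct substitution using $M_0=L_0L_0^\top$, and you get uniqueness by applying uniqueness of the Cholesky factorization to $(SL_0)(SL_0)^\top=M$; your extra derivation of Cholesky uniqueness, and the observation that $S$ itself is determined (already immediate from $SL_0=L$), are harmless additions the paper leaves implicit.
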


\begin{proof}
Using $U_0WU_0^\top=M_0=L_0L_0^\top$ and $\mathbf{U}=LL_0^{-1}U_0$, we obtain
\[
  \mathbf{U}W\mathbf{U}^\top
  =
  LL_0^{-1}M_0L_0^{-\top}L^\top
  =
  LL^\top
  =
  M.
\]
For the uniqueness, suppose $B=SU_0$ with $S$ lower triangular with positive diagonal entries satisfies $BWB^\top=M$.  Then $SL_0$ is lower triangular with positive diagonal entries and $(SL_0)(SL_0)^\top=SM_0S^\top=M$, so $SL_0=L$ by uniqueness of the Cholesky factorization, and $B=LL_0^{-1}U_0=\mathbf{U}$.
\end{proof}

In this proposition, the approximation enters through the choice of the background snapshot family as a coordinate system for the unknown snapshots.  In \cite{DrMoZa6} it was shown that the error in the data generated solutions is controlled by the error in the best approximation of the true snapshots from this space of background snapshots, and that these errors become asymptotically close when the best approximation error is small.  We state the corresponding bound for the present MIMO formulation in section~\ref{sec:bound}.

\section{A general error bound}
\label{sec:bound}

The general error analysis of \cite{DrMoZa6}, developed there for the SISO problem, carries over to the MIMO formulation of section~\ref{sec:construction}.  As observed in section 7 of \cite{DrMoZa6}, once the scalar rather than block Cholesky factorization is used, the arguments are purely algebraic and hold in any spatial dimension; only the time-major ordering of the snapshot family enters.  We do not reproduce the full set of results here, only the bound relating the data generated solutions to admissible approximations through their mass matrices.

For a snapshot family $V\in\R^{N\times P}$, stored by rows as the families in \eqref{eq:mass-matrices}, define
\begin{equation}
  \label{eq:stacked-norm}
  \norm{V}_2
  =
  \left(\operatorname{trace}\left(VWV^\top\right)\right)^{1/2}
  =
  \left(\sum_{i=0}^{N-1}\norm{v_i}_{L^2(\OmegaD)}^2\right)^{1/2},
\end{equation}
where $v_i$ are the rows of $V$.  Here and in what follows, with a slight abuse of notation, $L^2(\OmegaD)$ norms and inner products of grid functions denote the discrete ones induced by the quadrature weights $W$.  Following \cite{DrMoZa6}, we call an approximation $B\in\R^{N\times P}$ \emph{admissible} if $B=SU_0$ for some lower triangular matrix $S\in\R^{N\times N}$ with positive diagonal entries.  Here $S$ is the matrix of expansion coefficients of the rows of $B$ in the background snapshots: row $i$ of an admissible $B$ is $\sum_{k\le i}S_{ik}(U_0)_k$, a combination of background snapshots from the same or earlier positions in the time-major ordering, and the diagonal entry $S_{ii}$ is the coefficient of the background snapshot at position $i$ itself.  In the column notation of \cite{DrMoZa6}, $S$ corresponds to the transpose of the upper triangular matrix $\tilde{T}$ there.  The data generated family $\mathbf{U}=LL_0^{-1}U_0$ of \eqref{eq:reconstruction} is itself admissible, with coefficient matrix $S=LL_0^{-1}$, the transpose of the matrix $T=L_0^{-\top}L^\top$ of \cite{DrMoZa6}; as shown in the proof of Proposition~\ref{prop:general-bound} below, every admissible $B$ has coefficient matrix of this form, $S=\widehat{L}L_0^{-1}$, with $\widehat{L}$ the Cholesky factor of its own mass matrix.

Among admissible approximations there is a natural best one.  For each row index $i$, let $P_i$ denote the $W$-weighted $L^2(\OmegaD)$ orthogonal projection onto $\operatorname{span}\{(U_0)_0,\ldots,(U_0)_i\}$, the span of the background snapshots at the same or earlier positions, and define the family of \emph{sequential causal projections} of the true snapshots,
\begin{equation}
  \label{eq:causal-projection}
  \widehat{U}_i = P_i U_i,
  \qquad i=0,\ldots,N-1,
\end{equation}
where $U_i$ denotes row $i$ of the true family $U$ of \eqref{eq:mass-matrices}.  By construction $\widehat{U}=\widehat{S} U_0$ with $\widehat{S}$ lower triangular, and each row of $\widehat{U}$ is the closest element of its causal subspace, so if the diagonal entries of $\widehat{S}$ are positive, then $\widehat{U}$ is admissible and minimizes $\norm{U-B}_2$ over the admissible set; we then call $\widehat{U}$ the \emph{best admissible approximation} to $U$.  Positivity of the diagonal entries is not automatic and depends on the choice of sources and sampling; conditions ensuring it for $\tau$ small are given in Lemma~\ref{lem:positivity} at the end of this section.

\begin{proposition}
\label{prop:general-bound}
Let $U$, $U_0$, $W$, and the mass matrices $M$, $M_0$ be as in \eqref{eq:mass-matrices}, with Cholesky factors $L$, $L_0$ as in \eqref{eq:cholesky}, and let $\mathbf{U}=LL_0^{-1}U_0$ be the data generated family \eqref{eq:reconstruction}.  Let $\widehat{U}$ be the best admissible approximation \eqref{eq:causal-projection} to the true snapshots, and let $\widehat{M}=\widehat{U} W\widehat{U}^\top$ be its mass matrix.  Then for
\[
  \epsilon'
  =
  \kappa_2(M)\,\frac{\norm{M-\widehat{M}}_F}{\norm{M}_2},
  \qquad
  \kappa_2(M)=\norm{M}_2\,\norm{M^{-1}}_2,
\]
small enough,
\begin{equation}
  \label{eq:general-bound}
  \norm{\mathbf{U}-\widehat{U}}_2
  \le
  \epsilon'\,\norm{U}_2 .
\end{equation}
That is, the data generated internal solutions are within relative distance $\epsilon'$, in the stacked norm \eqref{eq:stacked-norm}, of the best causal approximation of the true solutions from the background snapshot family.
\end{proposition}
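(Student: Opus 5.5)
The plan is to reduce everything to a perturbation estimate for Cholesky factors.

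\emph{Step 1: write the admissible family in Cholesky form.} Since $\widehat{U}=\widehat{S}U_0$ with $\widehat{S}$ lower triangular and positive diagonal, its mass matrix is
\[
  \widehat{M}=\widehat{S}M_0\widehat{S}^\top=(\widehat{S}L_0)(\widehat{S}L_0)^\top .
\]
The factor $\widehat{S}L_0$ is lower triangular with positive diagonal. By uniqueness of the Cholesky factorization, exactly as in the proof of Proposition~\ref{prop:matching}, it equals the Cholesky factor $\widehat{L}$ of $\widehat{M}$. Hence $\widehat{U}=\widehat{L}L_0^{-1}U_0$, and therefore
\[
  \mathbf{U}-\widehat{U}=(L-\widehat{L})L_0^{-1}U_0 .
\]

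\emph{Step 2: remove the background family.} Set $Q=L_0^{-1}U_0$. Then $QWQ^\top=L_0^{-1}M_0L_0^{-\top}=I$, so the rows of $Q$ are $W$-orthonormal. Consequently
\[
  \norm{\mathbf{U}-\widehat{U}}_2^2=\operatorname{trace}\bigl((L-\widehat{L})(L-\widehat{L})^\top\bigr)=\norm{L-\widehat{L}}_F^2 .
\]
The same computation with $LQ=\mathbf{U}$ and Proposition~\ref{prop:matching} gives
\[
  \norm{U}_2^2=\operatorname{trace}(M)=\norm{L}_F^2 .
\]
The claim is therefore the purely matrix statement $\norm{L-\widehat{L}}_F\le\epsilon'\norm{L}_F$, where $L$ and $\widehat{L}$ are the Cholesky factors of $M$ and of $\widehat{M}=M-\Delta M$.

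\emph{Step 3: Cholesky perturbation (the main step).} I would invoke a standard first-order Cholesky perturbation bound of Sun/Stewart type. Write $\widehat{L}=L+\Delta L$. From
\[
  \Delta M=L\,\Delta L^\top+\Delta L\,L^\top+\Delta L\,\Delta L^\top,
\]
set $X=L^{-1}\Delta L$, which is lower triangular. Then, to first order,
\[
  X+X^\top=L^{-1}\Delta M L^{-\top}=:G .
\]
This gives $X=\Phi(G)$, where $\Phi$ takes the strict lower part plus half the diagonal, and $\norm{\Phi(G)}_F\le\frac{1}{\sqrt2}\norm{G}_F$. Since $\norm{L^{-1}}_2^2=\norm{M^{-1}}_2$, we get
\[
  \norm{\Delta L}_F\le\norm{L}_2\norm{X}_F\le\tfrac{1}{\sqrt2}\,\norm{L}_2\norm{M^{-1}}_2\norm{\Delta M}_F .
\]
Using $\norm{L}_2=\norm{M}_2^{1/2}$ and $\norm{L}_F\ge\norm{L}_2$, this becomes
\[
  \norm{\Delta L}_F\le\tfrac{1}{\sqrt2}\,\kappa_2(M)\,\frac{\norm{\Delta M}_F}{\norm{M}_2}\,\norm{L}_F=\tfrac{1}{\sqrt2}\,\epsilon'\,\norm{L}_F .
\]
The linearization leaves a factor $\sqrt2$ to spare.

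The main obstacle is controlling the quadratic term $\Delta L\,\Delta L^\top$ rigorously rather than only to first order. I would handle it through the nonlinear fixed-point equation
\[
  X=\Phi\bigl(G-XX^\top\bigr).
\]
Its solution satisfies
\[
  \norm{X}_F\le\frac{1}{\sqrt2}\bigl(\norm{G}_F+\norm{X}_F^2\bigr).
\]
For $\norm{G}_F\le\kappa_2(M)\norm{\Delta M}_F/\norm{M}_2=\epsilon'$ small enough, for instance $\epsilon'\le 1/4$, this forces $\norm{X}_F\le\sqrt2\,\norm{G}_F/\bigl(1+\sqrt{1-2\norm{G}_F}\bigr)$. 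That quantity is bounded by $\norm{G}_F\le\epsilon'$, and this is precisely where the hypothesis ``$\epsilon'$ small enough'' enters.

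Combining this with Step 2 gives \eqref{eq:general-bound}. No structure of the MIMO problem is used beyond the time-major triangular ordering.
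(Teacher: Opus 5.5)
Your Steps 1 and 2 are the paper's argument. Uniqueness of the Cholesky factorization gives $\widehat{S}L_0=\widehat{L}$, and the $W$-orthonormality of the rows of $L_0^{-1}U_0$ turns $\norm{\mathbf{U}-\widehat{U}}_2$ into $\norm{L-\widehat{L}}_F$ and $\norm{U}_2$ into $\norm{L}_F$. You diverge in Step 3. The paper just cites the first-order forward stability bound of \cite{ChPaSt}, $\norm{L-\widehat{L}}_F\le\frac{1}{\sqrt2}\norm{L}_2\kappa_2(M)\epsilon+O(\epsilon^2)$, and lets the spare factor $\sqrt2$ absorb the remainder. There, ``small enough'' means an unspecified, $M$-dependent threshold. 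Your self-contained derivation of the linear term is correct: it uses $X=L^{-1}\Delta L$, the map $\Phi$, and $\norm{\Phi(A)}_F\le\frac{1}{\sqrt2}\norm{A}_F$ for symmetric $A$. The non-asymptotic version you aim for would give the explicit threshold $\epsilon'\le 1/4$ (in fact $\epsilon'\le\sqrt2-1$), which is more than the paper's proof provides.

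As written, though, the non-asymptotic step has a gap. Let $x=\norm{X}_F$ and $g=\norm{G}_F$. The inequality $x\le\frac{1}{\sqrt2}(g+x^2)$ is equivalent to $x^2-\sqrt2\,x+g\ge0$. This holds when $x\le x_-(g)$ \emph{or} when $x\ge x_+(g)$, where $x_\pm(s)=\frac{1}{\sqrt2}\bigl(1\pm\sqrt{1-2s}\bigr)$, so it does not by itself force the small root. You must exclude the large branch, and the standard way is a continuity argument:
\begin{itemize}
\item Put $M(t)=(1-t)M+t\widehat{M}$ for $t\in[0,1]$. It is symmetric positive definite as a convex combination of two such matrices ($\widehat{M}=\widehat{S}M_0\widehat{S}^\top$ is positive definite since $\widehat{S}$ is invertible).
\item So $M(t)$ has a Cholesky factor $L(t)$ depending continuously on $t$, and $X(t)=L^{-1}L(t)-I$ is continuous with $X(0)=0$.
\item The same computation gives $x(t)\le\frac{1}{\sqrt2}\bigl(tg+x(t)^2\bigr)$. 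Hence $x(t)$ never lies in $\bigl(x_-(tg),x_+(tg)\bigr)\supseteq\bigl(x_-(g),x_+(g)\bigr)$, an interval that is nonempty for $g<1/2$.
\item Since $x(0)=0$, the intermediate value theorem yields $x(1)\le x_-(g)$.
\end{itemize}
With this added, the rest of your Step 3 goes through. A cosmetic point: you set $\widehat{M}=M-\Delta M$ but then expand $\Delta M=\widehat{L}\widehat{L}^\top-LL^\top$. The sign is harmless, since only $\norm{G}_F$ is used.
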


\begin{proof}
The proofs are the same as those of lemma 4.1 and proposition 4.3 of \cite{DrMoZa6}.  Let $\widehat{L}$ be the Cholesky factor of $\widehat{M}$.  We first show that
\begin{equation}
  \label{eq:norm-identity}
  \norm{\mathbf{U}-\widehat{U}}_2=\norm{L-\widehat{L}}_F,
\end{equation}
where $\norm{\cdot}_F$ denotes the Frobenius norm.  The matrix $\widehat{S} L_0$ is lower triangular with positive diagonal entries and satisfies
\[
  (\widehat{S} L_0)(\widehat{S} L_0)^\top=\widehat{S} M_0(\widehat{S})^\top=\widehat{U} W\widehat{U}^\top=\widehat{M},
\]
so $\widehat{S} L_0=\widehat{L}$ by uniqueness of the Cholesky factorization, and $\widehat{S}=\widehat{L}L_0^{-1}$ has the same Cholesky product form as the data generated family.  Then $\mathbf{U}-\widehat{U}=(L-\widehat{L})L_0^{-1}U_0$, and
\[
  (\mathbf{U}-\widehat{U})W(\mathbf{U}-\widehat{U})^\top
  =
  (L-\widehat{L})L_0^{-1}M_0L_0^{-\top}(L-\widehat{L})^\top
  =
  (L-\widehat{L})(L-\widehat{L})^\top,
\]
and taking traces of both sides gives \eqref{eq:norm-identity}.  We note also that $\norm{\mathbf{U}}_2^2=\operatorname{trace}(LL^\top)=\operatorname{trace}M=\norm{U}_2^2$ by Proposition~\ref{prop:matching}.  By the forward stability of the Cholesky factorization \cite{ChPaSt} (see also theorem 4.2 of \cite{DrMoZa6}), if $\epsilon=\norm{M-\widehat{M}}_F/\norm{M}_2$ satisfies $\epsilon\,\kappa_2(M)<1$, then
\[
  \norm{L-\widehat{L}}_F
  \le
  \frac{1}{\sqrt{2}}\,\norm{L}_2\,\kappa_2(M)\,\epsilon
  +
  O\!\left(\epsilon^2\right).
\]
The result follows from \eqref{eq:norm-identity} using $\norm{L}_2\le\norm{L}_F=\norm{U}_2$.
\end{proof}

\begin{remark}
The proof uses only the admissibility of $\widehat{U}$, so \eqref{eq:general-bound} holds verbatim with $\widehat{U}$ replaced by any admissible approximation and $\widehat{M}$ by its mass matrix.  We state the proposition for the best admissible approximation because of what it then says: whenever the mass matrix of the best causal approximation is close to the data Gramian and $M$ is well conditioned, the data generated solutions are asymptotically optimal in the admissible set. In computations with noisy data or modeling error $M$ may be indefinite; it is then symmetrized and regularized as in section~\ref{sec:noise}, and the identity \eqref{eq:norm-identity} holds with the factor actually used.
\end{remark}

We close this section with conditions ensuring that the sequential causal projections are admissible, implying that they are the best approximations referenced in Proposition~\ref{prop:general-bound}. For this we need that the diagonal entries of $\widehat{S}$ are positive.  Let $\{\bar u^0_i\}$ be the sequential orthonormalization, in the $L^2(\OmegaD)$ inner product and in the time-major ordering, of the background snapshots (the rows of $U_0$ in \eqref{eq:mass-matrices}), so that $(L_0)_{ii}=\ip{(U_0)_i}{\bar u^0_i}>0$.  The pivot $(L_0)_{ii}$ is the norm of the component of $(U_0)_i$ orthogonal to the preceding background snapshots, that is, its $L^2(\OmegaD)$ distance to their span; it equals $\norm{(U_0)_i}_{L^2(\OmegaD)}$ when the background snapshots are orthogonal, and is smaller otherwise.  The setting we have in mind is that of \cite{DrMoZa6} and of section~\ref{sec:numerics}: we have a family of problems where the sources are approximate delta functions of parameter $\tau$, normalized in $L^1(\OmegaD)$ ($g_m\ge0$, $\int_{\OmegaD}g_m\,dx=1$), with supports of diameter at most $2\tau$ contained near $\bd$ and disjoint from the support of $q$.  The lemma itself, however, assumes only what its conclusion needs.

\begin{lemma}[Positivity of the diagonal entries]
\label{lem:positivity}
Let $\widehat{U}= \widehat{S} U_0$ be the sequential causal projections of the true snapshots for a family of problems indexed by $\tau$. Suppose that the scattered fields are asymptotically small relative to the pivots,
\[
  r(\tau)
  :=
  \frac{\sup_i\norm{U_i-(U_0)_i}_{L^2(\OmegaD)}}{\min_i\,(L_0)_{ii}}
  \longrightarrow 0
  \qquad\text{as }\tau\to0.
\]
Then $\left|\widehat{S}_{ii}-1\right|\le r(\tau)$ for every $i$, so for $\tau$ small enough the diagonal entries of $\widehat{S}$ are positive and $\widehat{U}$ is admissible.
\end{lemma}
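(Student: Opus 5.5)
Our plan is to identify $\widehat{S}_{ii}$ explicitly through the Gram--Schmidt basis $\{\bar u^0_k\}$ and then compare with the background case, where the corresponding coefficient is exactly $1$.

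The first step is to express $\widehat{U}_i$ in the orthonormal basis. Since $\operatorname{span}\{(U_0)_0,\ldots,(U_0)_i\}=\operatorname{span}\{\bar u^0_0,\ldots,\bar u^0_i\}$, we have
\[
  \widehat{U}_i=P_iU_i=\sum_{k\le i}\ip{U_i}{\bar u^0_k}\,\bar u^0_k .
\]
Sequential orthonormalization gives $U_0=L_0\bar U^0$, where $\bar U^0$ is the matrix with rows $\bar u^0_k$, and $L_0$ is the Cholesky factor of $M_0$ with $(L_0)_{ii}=\ip{(U_0)_i}{\bar u^0_i}$. Hence $\bar U^0=L_0^{-1}U_0$, and therefore $\widehat{U}=C L_0^{-1}U_0$ with $C_{ik}=\ip{U_i}{\bar u^0_k}$ for $k\le i$. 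The matrix $C$ is lower triangular, so $\widehat{S}=CL_0^{-1}$. Because the rows of $U_0$ are linearly independent, this coefficient matrix is unique. Its diagonal is then
\[
  \widehat{S}_{ii}=\frac{C_{ii}}{(L_0)_{ii}}=\frac{\ip{U_i}{\bar u^0_i}}{\ip{(U_0)_i}{\bar u^0_i}} .
\]

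The second step is the perturbation estimate. Subtracting $1$ gives
\[
  \widehat{S}_{ii}-1=\frac{\ip{U_i-(U_0)_i}{\bar u^0_i}}{(L_0)_{ii}} .
\]
By Cauchy--Schwarz, and using $\norm{\bar u^0_i}=1$, the numerator is bounded by $\norm{U_i-(U_0)_i}_{L^2(\OmegaD)}$. This is at most the supremum over $i$, while the denominator is at least $\min_i (L_0)_{ii}$. Together these give $|\widehat{S}_{ii}-1|\le r(\tau)$.

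Finally, since $r(\tau)\to0$, for $\tau$ small enough $r(\tau)<1$. Then $\widehat{S}_{ii}\ge 1-r(\tau)>0$ for all $i$, so $\widehat{U}$ is admissible in the sense defined before Proposition~\ref{prop:general-bound}.

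The only delicate point is correctly identifying $\widehat{S}$ as $CL_0^{-1}$, in particular its diagonal. This relies on the lower-triangular structure of $L_0^{-1}$ and on the fact that $(L_0^{-1})_{ii}=1/(L_0)_{ii}$. We also need the discrete $W$-weighted inner product to be used consistently throughout; the rest is a one-line estimate.
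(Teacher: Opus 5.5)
Your proof is correct and takes the same route as the paper. You identify $\widehat{S}_{ii}=\ip{U_i}{\bar u^0_i}/(L_0)_{ii}$ through the Gram--Schmidt basis, subtract $1$, and apply Cauchy--Schwarz with $\norm{\bar u^0_i}=1$. Your explicit factorization $\widehat{S}=CL_0^{-1}$ spells out a step the paper states in one line.
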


\begin{proof}
Writing $(U_0)_i=\sum_{k\le i}(L_0)_{ik}\,\bar u^0_k$, the coefficient of $(U_0)_i$ in the sequential projection $\widehat{U}_i$ of $U_i$ is
\[
  \widehat{S}_{ii}
  =
  \frac{\ip{U_i}{\bar u^0_i}}{\ip{(U_0)_i}{\bar u^0_i}}
  =
  1+\frac{\ip{U_i-(U_0)_i}{\bar u^0_i}}{(L_0)_{ii}},
\]
and the claim follows from the Cauchy--Schwarz inequality, since $\norm{\bar u^0_i}_{L^2(\OmegaD)}=1$.
\end{proof}

\begin{remark}
The ratio $r(\tau)$ shows how singular sources and well chosen sampling drive the one dimensional proofs of lemmas 5.5 and 6.3 of \cite{DrMoZa6}. There the background snapshots are not bounded in $L^2$: they grow like $\tau^{-1/2}$, and the pivots inherit this growth.  In general the pivots can be much smaller than the norms if there is oversampling and the family becomes nearly linearly dependent. So, the denominator of $r(\tau)$ combines the concentration of the sources with the uniform independence of the snapshot family.  The scattered fields, on the other hand, are smoother than the background fields, because they arise from integration of the concentrated fields against the Green's function kernel.  In one dimension the kernel is bounded, so together with the $L^1$ normalization of the sources the scattered fields are uniformly bounded, so $r(\tau)=O(\sqrt{\tau})$; this is the sup norm bound of remark 5.3 of \cite{DrMoZa6}.  In higher dimensions the kernel is weakly singular, so the a priori bound on the scattered fields is weaker; the mechanism is the same, and as measured below the gain is if anything larger.  The pivots involve only the known background medium and are computable.
For the radial tent sources in the configuration of section~\ref{sec:numerics}, in the convention of section~\ref{subsec:bestapprox} (sampling at a quarter of the pulse width, $h=\tau/40$), over $\tau$ from $1/2$ to $1/8$ the smallest pivot grows like $\tau^{-1.5}$, while the scattered field norms $\sup_i\norm{U_i-(U_0)_i}_{L^2(\OmegaD)}$ remain bounded for both media, so $r(\tau)$ tends to zero at a rate close to $\tau^{1.5}$.  Consistently, in the experiments of section~\ref{subsec:bestapprox} the diagonal entries $\widehat{S}_{ii}$ are observed to lie in $[0.84,1]$ and to approach one under refinement, as in the one dimensional asymptotics.
\end{remark}

\section{Noisy data and regularization}
\label{sec:noise}
We model measured response data as
\begin{equation}
  \label{eq:noisy-response}
  F^\delta_{\ell m}(t_j)
  =
  F_{\ell m}(t_j)
  +
  \delta\,\sigma_F\,\eta_{\ell m j},
\end{equation}
where $\eta_{\ell m j}$ are independent standard normal variables and
\[
  \sigma_F =
  \left(
    \frac{1}{n_b^2 n_t}
    \sum_{\ell,m,j} |F_{\ell m}(t_j)|^2
  \right)^{1/2}
\]
is the root mean square response amplitude, with $n_t$ the number of time samples as in section~\ref{sec:construction}.  The noisy mass matrix $M^\delta$ is assembled from $F^\delta$ using \eqref{eq:block-gramian}.  Even if the continuum Gramian is positive, $M^\delta$ may be indefinite.  A direct Cholesky factorization then fails, or, if repaired only by a tiny shift, may amplify the noise through the small eigenvalues; see \cite{ChPaSt} for a perturbation analysis of the Cholesky factorization.

The reconstruction algorithm with noise is:
\begin{enumerate}
  \item acquire or simulate response samples $F^\delta_{\ell m}(j\tau)$;
  \item optionally symmetrize $F^\delta_{\ell m}\leftarrow(F^\delta_{\ell m}+F^\delta_{m\ell})/2$;
  \item assemble $M^\delta$ from \eqref{eq:block-gramian};
  \item replace $M^\delta$ by a positive regularized matrix $M^\delta_\alpha$;
  \item compute $\mathbf{U}_\alpha$ from \eqref{eq:reconstruction} with $M^\delta_\alpha$.
\end{enumerate}

We focus on two regularizations: a diagonal shift and an eigenvalue floor.  The first method minimally shifts all modes; the second is a spectral repair that more selectively suppresses unstable modes.  Truncation-based spectral regularizations of data-driven Gramians have been used previously in the inversion context, see \cite{BoDrMaZa3,baker2025regularized}.

\subsection{Diagonal shift}

The minimal repair is
\[
  M^\delta_\alpha = M^\delta + \alpha I,\qquad
  \alpha\ge \max\{0,-\lambda_{\min}(M^\delta)\}+\epsilon .
\]
This is a Tikhonov-type regularization \cite{engl1996regularization}; in the experiments below $\epsilon=10^{-10}$.  It guarantees a Cholesky factorization but does not selectively damp directions dominated by noise.

\subsection{Eigenvalue flooring}

Let $M^\delta=V\Lambda V^\top$.  The eigenvalue-floor reconstruction replaces
\[
  \Lambda_i
  \quad\text{by}\quad
  \max\{\Lambda_i,\lambda_{\mathrm{floor}}\},
  \qquad
  \lambda_{\mathrm{floor}}
  =
  \max\{\rho\,\lambda_{\max}(M^\delta),\epsilon\}.
\]
Here $\rho$ is a dimensionless relative floor, tying the threshold to the scale of the data Gramian, and $\epsilon$ is an absolute safeguard for the degenerate case in which $\lambda_{\max}$ itself is small or nonpositive.  In all experiments below we take $\rho=10^{-2}$ and $\epsilon=10^{-10}$; the choice is discussed in section~\ref{subsec:noise-study}.  This keeps the well-determined energetic modes nearly unchanged while preventing unstable small or negative modes from entering the Cholesky factor.  It is closely related to the computation of a nearest positive semidefinite matrix \cite{higham1988computing}.

\section{Numerical experiments}
\label{sec:numerics}
In this section we do numerical experiments where we test internal solution convergence empirically and consider noisy data.
In our studies here, we test with several different media.  The refinement studies of section~\ref{subsec:bestapprox} use a smooth Gaussian bump of amplitude $20$ and a discontinuous square ring of contrast $20$, both defined at the start of section~\ref{subsec:bestapprox}.  The examples of section~\ref{subsec:noise-study} use high contrast composite media, built from a Gaussian bump of amplitude $120$ with discontinuous inclusions of height $90$.
In all of the experiments we use a square domain $[0,1]^2$, homogeneous Neumann boundaries, localized radial tent source profiles near the boundary, and zero background $q_0=0$.  The response data and the true reference field are generated on a nested fine grid with twice the spatial resolution and half the time step of the reconstruction grid, so that the measured data carry a discretization error different from that of the background solves and an inverse crime is avoided.  The background snapshots are computed on the reconstruction grid, as they would be in practice.  Both solvers use a fourth order spatial discretization. The response sampling rate and the grid rule differ between the refinement studies and the noise study, and are stated in the corresponding subsections.

\subsection{Refinement studies and best approximation comparison}
\label{subsec:bestapprox}

In this subsection we examine how the reconstruction behaves as the number of sources, time sampling, and source radius are refined simultaneously.
Boundary sources are placed along all four sides, with their spacing tied to the pulse radius $\tau$: with $n_s$ intervals per side, the source count is $K=4n_s$ and the refinement parameter is $\tau=1/n_s$.  The radial tent pulses have radius $\tau$ with fixed unit $L^1$ norm, so that their height grows as $\tau$ decreases.  We emphasize that the two refinements play complementary roles. The time samples resolve the fields in the direction of propagation, into the domain, since consecutive background snapshots from a given source are wavefronts a fixed fraction of $\tau$ apart. The multiple sources resolve the fields laterally, along the boundary.  Tying the source spacing to $\tau$ refines both resolutions together, so that the background family covers the domain at roughly scale $\tau$; this is the two dimensional counterpart of the refinement in \cite{DrMoZa6}, where the lateral direction is absent.  In summary, refining $\tau$ simultaneously increases the number of sources, shortens the sampling interval, and sharpens the approximate boundary delta.

The response sampling in this subsection is at a quarter of the pulse width, $\Delta t_{\mathrm{sample}}=\tau/2$, and the reconstruction grid is tied to the pulse by $h=\tau/40$, so that the sharpening pulses remain equally resolved across the experiments.   In these sweeps $n_s=2,\ldots,8$, so $\tau$ ranges from $1/2$ to $1/8$.  We consider two media: a smooth Gaussian bump of amplitude $20$,
\[
  q_{\mathrm{g}20}(x,y)=20\exp\left(-\frac{(x-0.28)^2+(y-0.36)^2}{2(0.12)^2}\right),
\]
at target time $t\approx0.5$, and a discontinuous square ring of contrast $20$,
\[
  q_{\mathrm{r}20}(x,y)=20\,\mathbf{1}_{0.05\le \max(|x-0.30|,|y-0.50|)\le 0.10},
\]
at target time $t\approx0.34$.  Figures~\ref{fig:gaussian20-gallery} and~\ref{fig:ring20-gallery} show the two coefficients together with representative reconstructions at the finest refinement $\tau=1/8$ for a left-side source. Note that the error between the data generated and true field is far smaller and less structured than the background discrepancy.

\begin{figure}[ht]
\centering
\includegraphics[width=\linewidth]{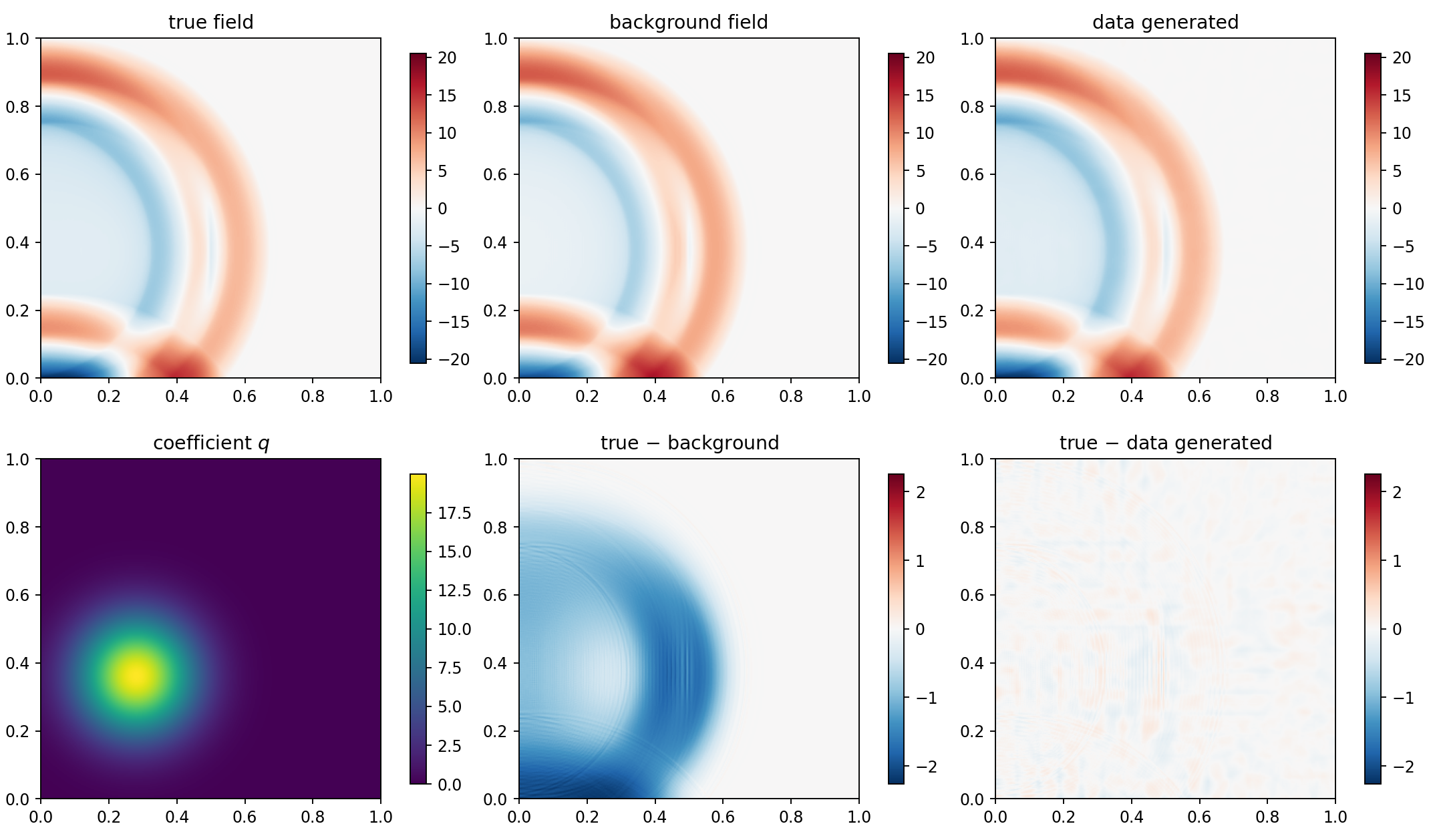}
\caption{The smooth medium of the refinement studies: Gaussian bump of amplitude $20$, $\tau=1/8$, left-side source. The panels show the coefficient $q_{\mathrm{g}20}$, the true field at the sample time nearest $t=0.5$, the background field, the data generated reconstruction, its error, and the error for the background field.  }
\label{fig:gaussian20-gallery}
\end{figure}

\begin{figure}[ht]
\centering
\includegraphics[width=\linewidth]{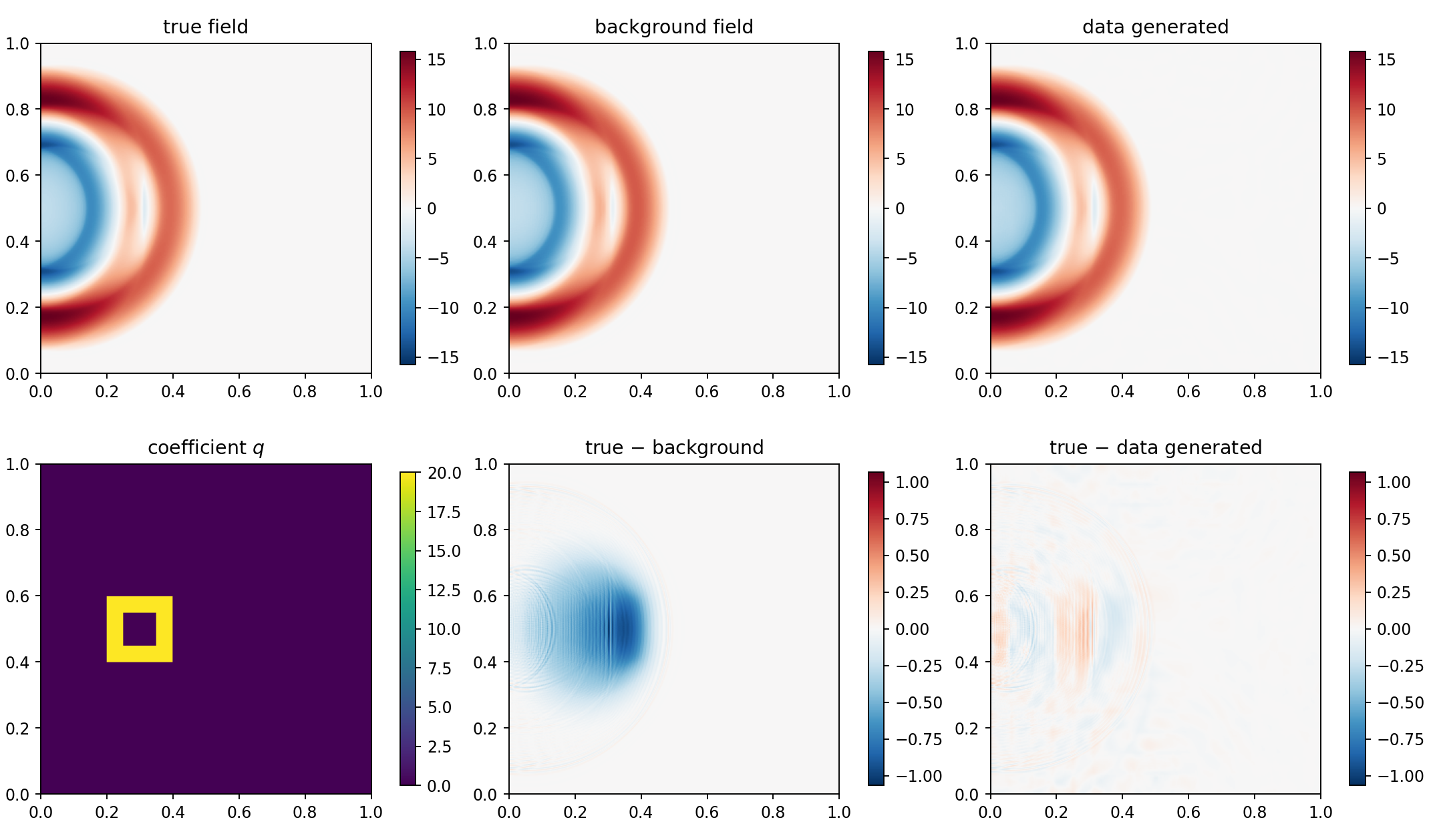}
\caption{The discontinuous medium of the refinement studies: square ring of contrast $20$ centered at $(0.30,0.50)$, $\tau=1/8$, left-side source.  The panels show the coefficient $q_{\mathrm{r}20}$, the true field at the sample time nearest the target $t=0.34$ (here $t=0.3125$), the background field, the data generated reconstruction,  its error, and the error for the background field.  }
\label{fig:ring20-gallery}
\end{figure}

To measure the error we use the aggregate relative error in the stacked $L^2$ norm \eqref{eq:stacked-norm},
\[
  E_{\mathrm{agg}}
  =
  \frac{\norm{U-\mathbf{U}}_2}{\norm{U}_2},
\]
which is closely related to---though not identical with---the quantity controlled by Proposition~\ref{prop:general-bound}. The proposition bounds the distance from $\mathbf{U}$ to the best causal approximation $\widehat{U}$, whereas $E_{\mathrm{agg}}$ measures the distance to the true field $U$.  For the solution figures and the noise study we also use the per-snapshot relative error and the ratio to the background error (using the continuous notation $u_{\mathrm{true}}=U$, ${\bf u}={\bf U}$, $u_0=U_0$),
\[
  E_{\mathrm{rel}}
  =
  \frac{\norm{u_{\mathrm{true}}(\cdot,t)-{\bf u}(\cdot,t)}_2}
       {\norm{u_{\mathrm{true}}(\cdot,t)}_2},
  \qquad
  E_{\mathrm{DG/BG}}
  =
  \frac{\norm{u_{\mathrm{true}}-{\bf u}}_2}
       {\norm{u_{\mathrm{true}}-u_0}_2},
\]
where values of $E_{\mathrm{DG/BG}}$ below one mean that the data generated reconstruction improves on the background field.

The analysis in \cite{DrMoZa6} shows that the data generated internal solutions are asymptotically close to the causal projection of the true snapshots onto the space of background snapshots, $\widehat{U}$.  We therefore also measure the best causal approximation error, in addition to the full $L^2$ best approximation error, which does not enforce triangularity.    The gap between the two variants (causal projection and full projection) measures how much the triangular structure costs. It is important to mention that the projection errors are smaller than the data generated errors and are therefore more sensitive to the discretization error.  In a grid-doubling experiment at $n_s=4$ the data generated errors are essentially unchanged, while the projection errors drop substantially: at $h=\tau/40$ the projections are dominated by the representation error of the grid rather than by the approximation properties of the family.  Because of this we plot the data generated and background errors at $h=\tau/40$, while the projection curves are computed at $h=\tau/80$ and reported only for $n_s\le5$. 

In Figure~\ref{fig:refined-convergence} we plot the aggregate relative errors over the refinement.   We emphasize that the convergence rate of $\sqrt{\tau}$ of \cite{DrMoZa6} has not been established in two dimensions, so we measure the empirical rates and show the $\sqrt{\tau}$ line only as a reference.  For the smooth Gaussian medium, the decay is faster than first order at the coarse resolutions and settles onto a rate consistent with the $\sqrt{\tau}$ reference at the finest ones, while the resolved best approximation errors decay at close to first order.  For the discontinuous ring the errors are smaller in absolute terms but the decay is slower, and beyond $n_s=6$ the error is dominated by the grid and omitted.

It is important to note that the error in Figure~\ref{fig:refined-convergence} is relative. Since the normalized sources concentrate as $\tau$ decreases, the $L^2$ norms of the fields in the denominators grow. In Figure~\ref{fig:l1-absolute} we therefore show the same refinement in the absolute per-snapshot mean $L^1$ error. We do this on the resolved grid $h=\tau/80$, for the background field, the data generated reconstruction, and the best causal approximation.   In this norm the data generated error decreases at fitted log-log rates of $0.42$ and $0.46$, while in the scaled absolute $L^2$ norm they level off. 

It is not clear that in these experiments the relationship between the time step, number of sources, and source concentration is optimal. Refinement rules with better convergence scaling, together with a rigorous best approximation analysis of the two dimensional family, are the subject of future work.
\begin{figure}[ht]
\centering
\includegraphics[width=\linewidth]{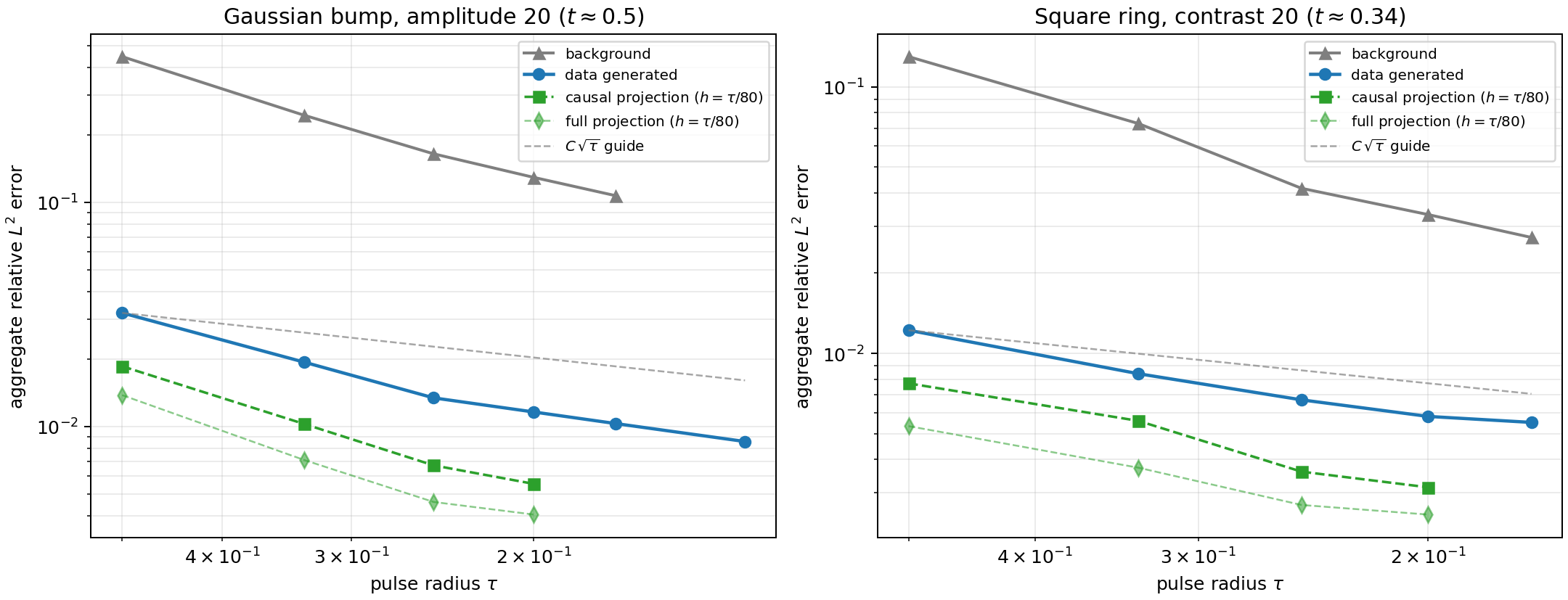}
\caption{Refinement studies with response sampling at a quarter of the pulse width ($\Delta t_{\mathrm{sample}}=\tau/2$) and grid $h=\tau/40$.  Left: Gaussian bump of amplitude $20$ at $t\approx0.5$; right: square ring of contrast $20$ at $t\approx0.34$.  Each panel shows the aggregate background error, the aggregate data generated error, and the causal and full projection errors (the projections computed at $h=\tau/80$, $n_s\le5$); the $C\sqrt{\tau}$ line is the one dimensional reference rate of \cite{DrMoZa6}, not an established rate in two dimensions.  Only resolution-verified points are shown: the square-ring curves end at $n_s=6$, beyond which the measured values sit at the representation floor of the $h=\tau/40$ grid rule.}
\label{fig:refined-convergence}
\end{figure}

\begin{figure}[ht]
\centering
\includegraphics[width=\linewidth]{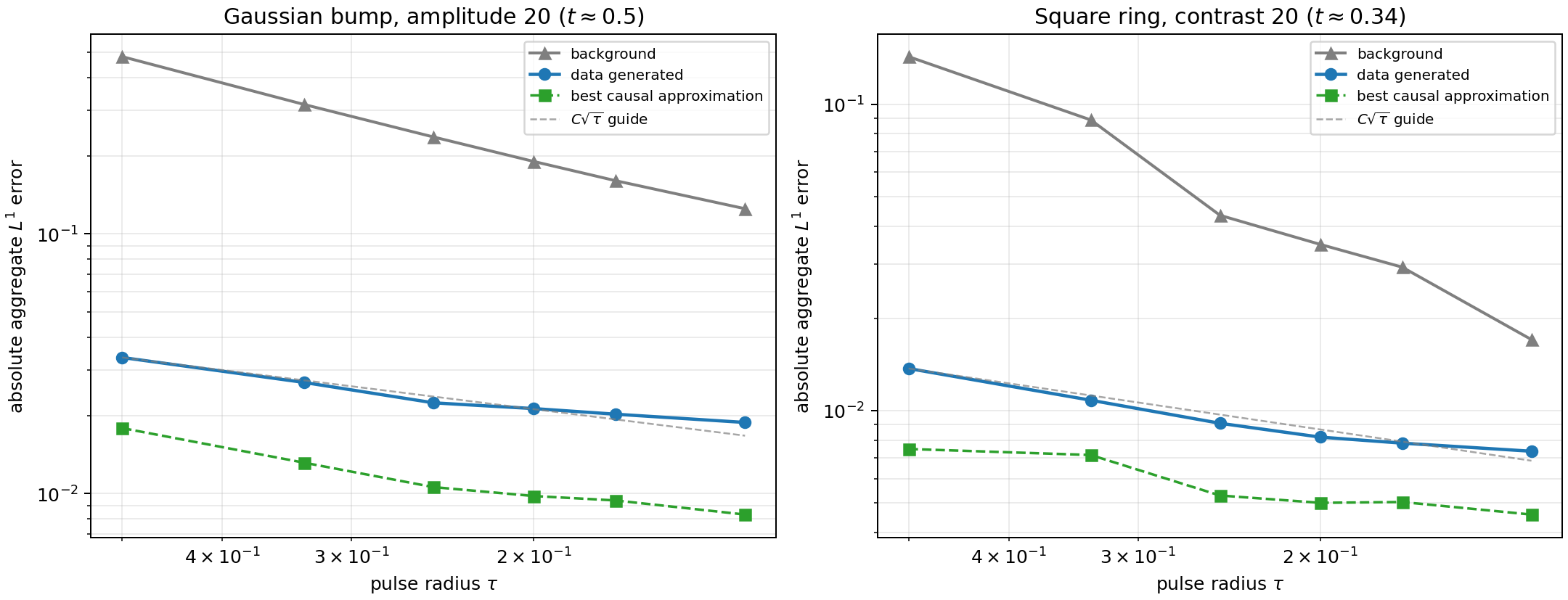}
\caption{The refinement of Figure~\ref{fig:refined-convergence} measured in the absolute aggregate $L^1$ error, averaged over the stacked snapshots, on the resolved grid $h=\tau/80$: the background field, the data generated reconstruction, and the best causal approximation, for the Gaussian bump (left) and the square ring (right).  In contrast with Figure~\ref{fig:refined-convergence}, which shows relative errors, no normalization by the growing snapshot norms is involved here.  The data generated error decreases monotonically at rates $0.42$ and $0.46$, close to the $\sqrt{\tau}$ reference, and stays within a constant factor of the best causal approximation at every refinement.  All six refinements of the sweep are shown here, since on this grid every point is resolution verified, whereas the curves of Figure~\ref{fig:refined-convergence} are restricted to the points verified on the coarser rule.}
\label{fig:l1-absolute}
\end{figure}

\subsection{High contrast composite media and noisy data}

\label{subsec:noise-study}

The experiments in this subsection use the earlier convention of \cite{DrMoZa6}, with response sampling at half the pulse width, $\Delta t_{\mathrm{sample}}=\tau$, and reconstruction grid $h=\tau/20$.  The fields are visually indistinguishable between the two conventions, and the qualitative comparisons below do not depend on the choice.  The single exception is the strong scattering example of Figure~\ref{fig:strong-contrast}, where the contrast is high enough that the sampling rate is visible in the reconstruction; that figure is computed with the finer time sampling of $\Delta t_{\mathrm{sample}}=\tau/2$.  To test how the solution reconstructions hold for more complex media,  we use a composite medium built from the higher contrast Gaussian bump
\[
  q_{\mathrm{g}}(x,y)=120\exp\left(-\frac{(x-0.28)^2+(y-0.36)^2}{2(0.12)^2}\right)
\]
with a square-ring inclusion,
\[
  q_{\mathrm{ring}}(x,y)
  =
  q_{\mathrm{g}}(x,y)
  +
  90\,\mathbf{1}_{0.06\le \max(|x-0.72|,|y-0.72|)\le 0.12}.
\]
This medium introduces sharp interfaces and spatially separated scattering features, and is also the medium of the noise study.  Figure~\ref{fig:square-ring} shows the reconstruction at the finest refinement $\tau=1/8$, for a left-side source at $t\approx0.5$.  In this example, the data generated field captures the phase and amplitude changes induced by the inclusion more accurately than the unperturbed background field.  The remaining error is concentrated near the scattered wavefronts and around the sharp interfaces. Recall that the background snapshots are exactly in the approximation space, so the error depends on the regularity of the scattered field \cite{DrMoZa6}.

\begin{figure}[ht]
\centering
\includegraphics[width=\linewidth]{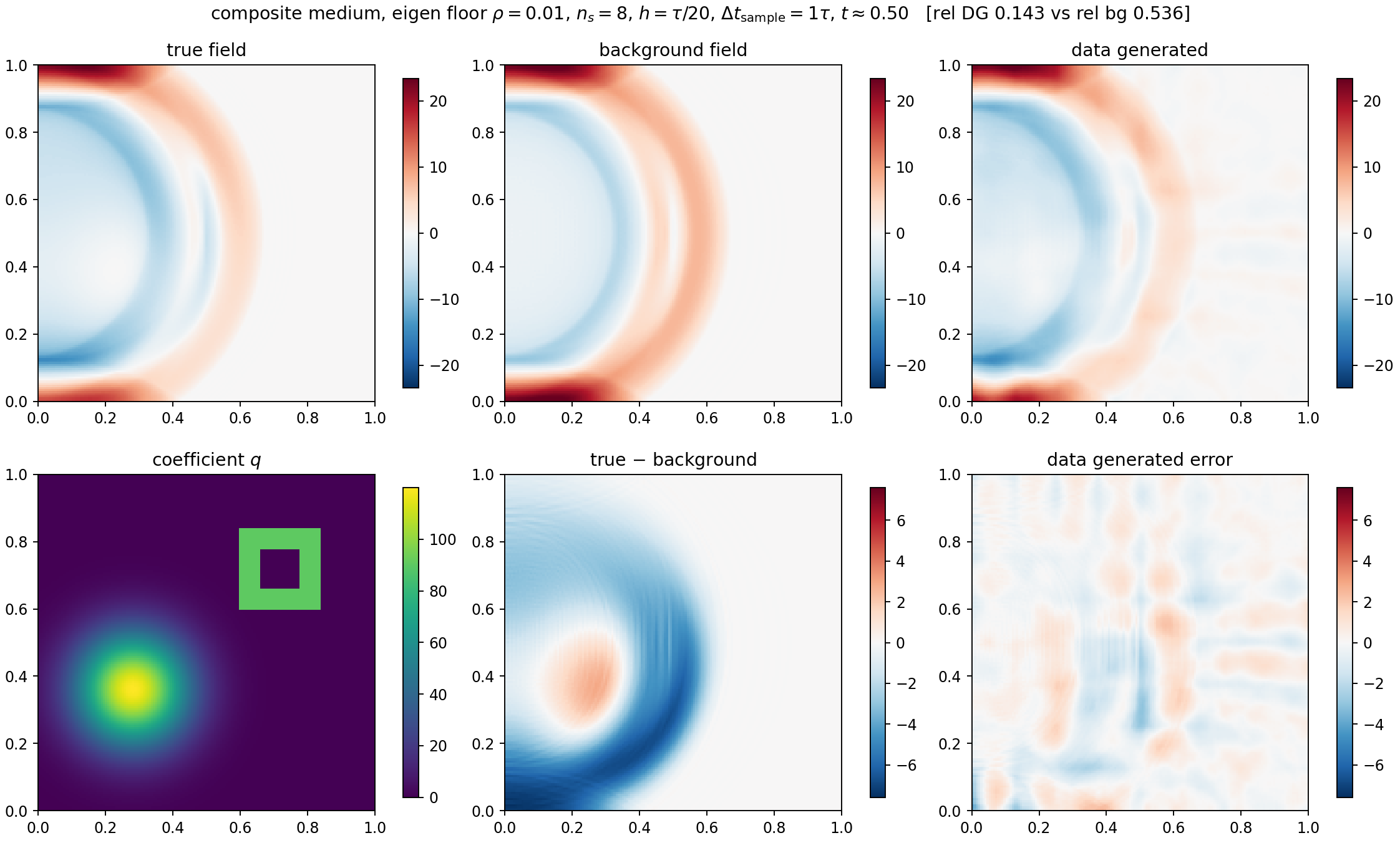}
\caption{Composite medium with a Gaussian bump and a square-ring inclusion: the coefficient $q_{\mathrm{ring}}$, the true and background fields at $t\approx0.5$, the data generated reconstruction, and the two residuals on a common color scale. Here $\tau=1/8$ and the source is from the left.  The reconstruction uses the eigenvalue floor at $\rho=10^{-2}$, as in the noise study below, and has relative error $14.3\%$. The background solution has error $53.6\%$.}
\label{fig:square-ring}
\end{figure}

 We also consider pushing the contrast even further. Figure~\ref{fig:strong-contrast} shows a reconstruction of a Gaussian bump of amplitude $240$, where we take $n_s=10$, with response sampling at a quarter of the pulse width as in section~\ref{subsec:bestapprox}.  At this contrast the sampling rate limits the reconstruction: on the grid $h=\tau/20$ used throughout this subsection, halving the sampling interval from $\Delta t_{\mathrm{sample}}=\tau$ to $\tau/2$ lowers the relative error of the data generated solution from $24\%$ to $11\%$ and removes the granularity of the error panel, while the background discrepancy is unchanged at $84\%$.
 
\begin{figure}[ht]
\centering
\includegraphics[width=\linewidth]{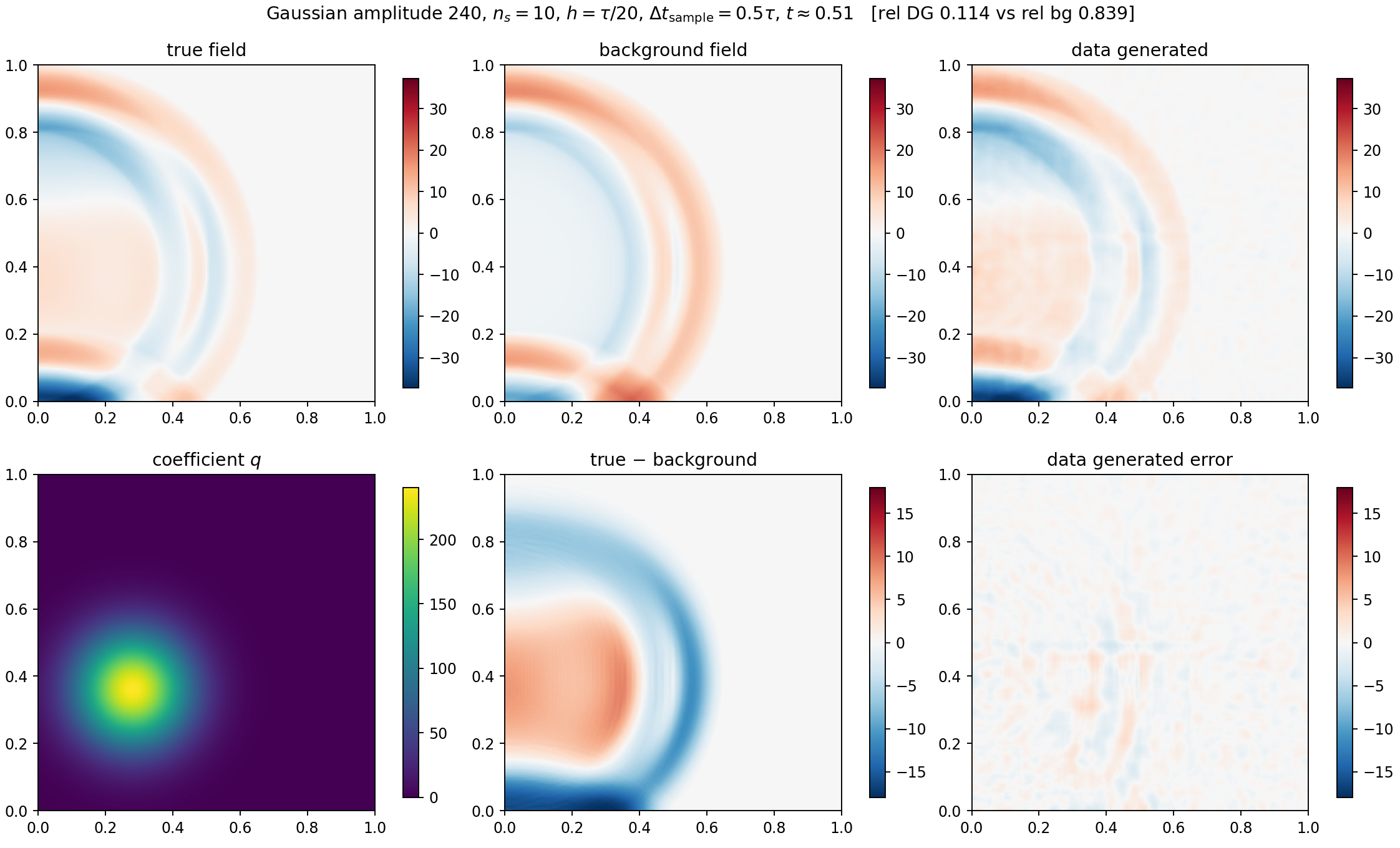}
\caption{Strong scattering: the Gaussian medium with amplitude $240$, $n_s=10$, at $t\approx0.5$, with response sampling at a quarter of the pulse width ($\Delta t_{\mathrm{sample}}=\tau/2$) on the reconstruction grid $h=\tau/20$.  Top row: true field, background field, and data generated reconstruction.  Bottom row: the coefficient $q$, the difference between the true and background fields, and the data generated error, on a common color scale.  At this contrast the background field has relative error $84\%$, while the data generated solution has relative error $11\%$, with the error concentrated in and behind the scatterer; the restructured wavefront is recovered.}
\label{fig:strong-contrast}
\end{figure}
The noise study uses the square-ring composite medium $q_{\mathrm{ring}}$ defined above (Figure~\ref{fig:square-ring}), in the convention stated at the start of this subsection, at the finest refinement $\tau=1/8$: fine-grid response data, fourth order solvers, sampling at half the pulse width, and $K=32$ boundary sources on the $161^2$ reconstruction grid.  The target is the snapshot near $t=0.5$ for a left-side source, for which the data generated error from clean data is $14.3\%$ against a background discrepancy of $53.6\%$.  Noisy data are generated from \eqref{eq:noisy-response}, and results are means over three noise realizations.

Figure~\ref{fig:noise-summary} summarizes the study, showing the relative error and the DG/BG ratio as functions of the noise level for the two regularizations of section~\ref{sec:noise}.  The eigenvalue floor is strikingly insensitive to the noise: the reconstruction error rises only from $14\%$ on clean data to $15\%$ at $5\%$ response noise and $19\%$ at $10\%$, so the reconstruction remains about three times more accurate than the unperturbed background field across the whole range.  The comparison between the two regularizations makes the case for the floor.  The diagonal shift stabilizes the Cholesky factorization, but it retains the noise amplification of the small eigenvalues and degrades steadily: at $5\%$ noise its reconstruction is no better than the background field, and at $10\%$ it is worse.

The strength of the floor matters, and Figure~\ref{fig:noise-floor} shows why we take $\rho=10^{-2}$.  Below about $10^{-4}$ the floor touches no mode of the data Gramian and the reconstruction is effectively unregularized, so the error is the same as with no floor at all; above about $10^{-1}$ features are lost, ane we were able to use the single $\rho$ value for all the experiments. 
Figure~\ref{fig:noise-gallery} shows the true and background fields side by side with the noise free and noisy reconstructions. 

\begin{figure}[p]
\centering
\includegraphics[width=\linewidth]{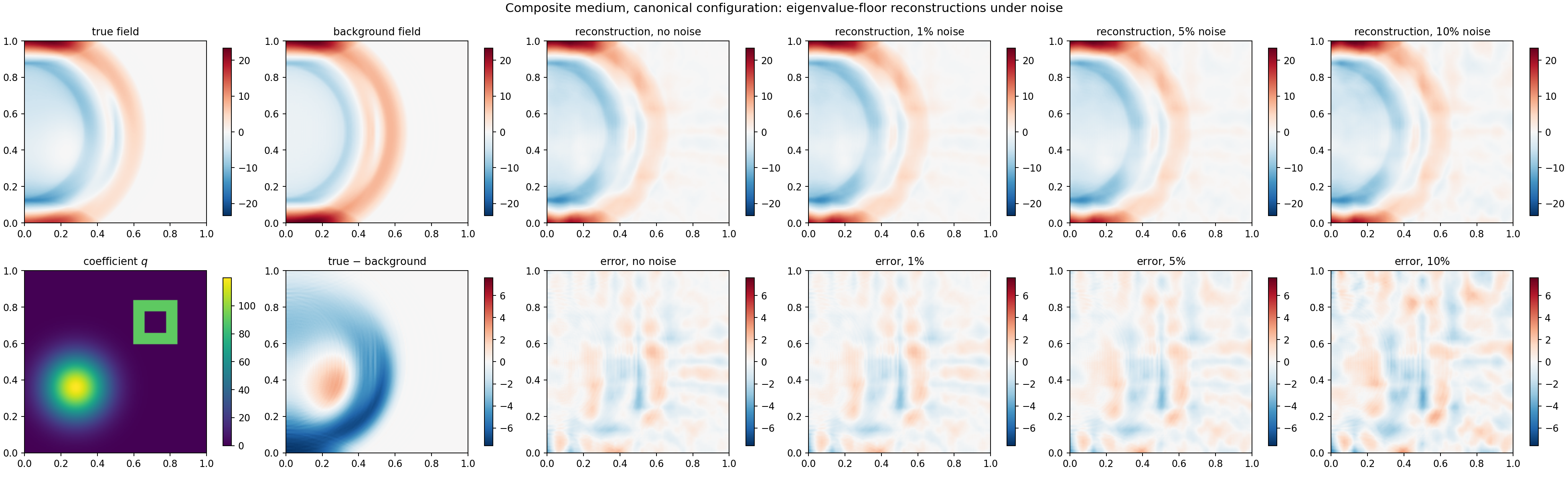}
\caption{Noise study for the composite medium.  Top row: the true field at $t\approx0.5$, the unperturbed background field, and the eigenvalue-floor reconstructions from noise free data and from data with $1\%$, $5\%$, and $10\%$ response noise.  Bottom row: the coefficient $q$, the background error, and the corresponding reconstruction errors, all on a common scale.  The background discrepancy contains the coherent scattered wave, while the reconstruction errors remain largely unstructured.}
\label{fig:noise-gallery}
\end{figure}

\begin{figure}[ht]
\centering
\includegraphics[width=0.86\linewidth]{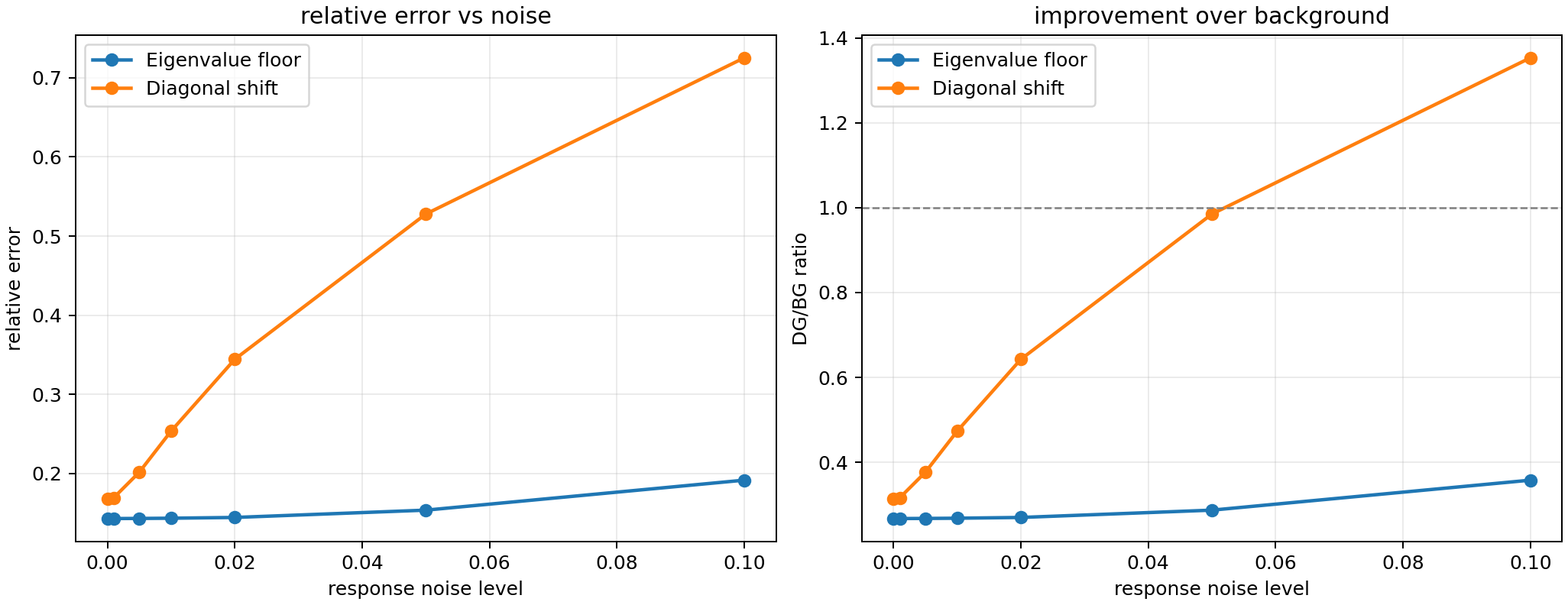}
\caption{Relative error and DG/BG ratio as functions of the response noise level for the diagonal shift and eigenvalue-floor regularizations.  Values above the dashed line mean that the reconstruction is less accurate than the unperturbed background field.}
\label{fig:noise-summary}
\end{figure}

\begin{figure}[ht]
\centering
\includegraphics[width=0.62\linewidth]{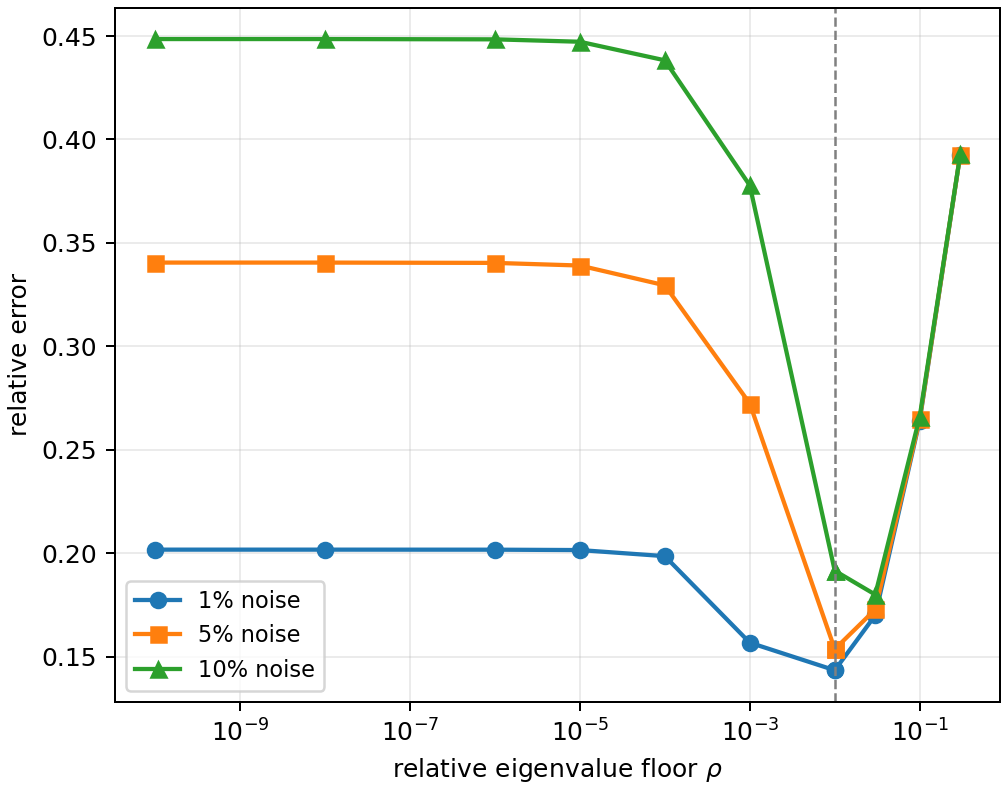}
\caption{Relative error of the eigenvalue-floor reconstruction as a function of the relative floor $\rho$ at three noise levels. The dashed line marks the value $\rho=10^{-2}$ used throughout.}
\label{fig:noise-floor}
\end{figure}

\section{Discussion}
\label{sec:discussion}

The experiments demonstrate that one can compute internal wave fields from boundary response data  without any knowledge of $q$. We see that multiple boundary sources are important in two dimensions: they enrich the snapshot space and make the block Gramian a more informative representation of interior propagation, as in the discussion of the MIMO setting in \cite{DrMoZa6}. Setups with rigorous convergence rates have not been established, but the numerical evidence suggests that $L^1$ convergence is possible.  We also show that the field reconstructions are not too sensitive to noise if one regularizes the Gramian with eigenvalue flooring.  Eigenvalue flooring preserves the dominant data-supported subspace while suppressing the modes most affected by noise and discretization.

The media considered here have all had strong enough contrast that the scattered field remained well above the noise; we have not considered the low contrast regime, where the noise can easily exceed the perturbation signal and the background field itself becomes the better estimate.  A regularization that detects this situation and reverts to the background solution automatically is the subject of future work.

The generation of internal fields from boundary data has the  limitation that it reconstructs snapshots in the (causal) span of the background wavefields. For wave speed problems where the unknown coefficient produces fields that are not triangular, caustics, or other phenomena poorly represented by the background snapshot family, the reconstruction will be biased. 

Future work also includes a rigorous convergence analysis of the MIMO formulation extending \cite{DrMoZa6}, including a best approximation analysis of the two dimensional background snapshot family and refinement rules with improved convergence scaling, and further use of the reconstructed internal fields in the Lippmann--Schwinger equation \cite{DrMoZa,BoGaMaZi} to recover the plasma coefficient $q$.  

\section*{Acknowledgments}

The authors used AI assistants (Anthropic's Claude and OpenAI's ChatGPT) to help draft and edit the manuscript and to write and run the numerical experiment code under their direction.  The authors have verified all results and take full responsibility for the content.

V. Druskin was partially supported by the AFOSR grant FA 955020-1-0079 and the NSF grant  DMS-2110773. M. Zaslavsky was partially supported by  AFOSR grant  FA9550-20-1-0079. S. Moskow was partially supported by NSF grant DMS-2308200.  

\section*{Reproducibility}

The complete code for the experiments in this paper is available at
\begin{center}
  \url{https://github.com/sharimoskow/plasma-internal-solutions-2d}
\end{center}
The implementation is self-contained Python with no external PDE libraries.  All of the drivers below take the sampling rate, grid rule, solver order, pulse shape, medium, contrast, and regularization as options, so each experiment can be repeated under a different convention.

The refinement studies of section~\ref{subsec:bestapprox} are produced by \path{study_best_approximation_tau_sweep.py} (sampling at a quarter of the pulse width, fine-grid data, fourth order solvers, grid rule $h=\tau/40$), with the convergence figure of Figure~\ref{fig:refined-convergence}, restricted to resolution-verified points, drawn by \path{resolved_convergence_figs.py}.  The absolute $L^1$ comparison of Figure~\ref{fig:l1-absolute} is produced by \path{hc_paper_sweep.py} on the resolved grid $h=\tau/80$, which records the background, data generated and projection errors in both norms from the same runs, and is drawn by \path{l1_absolute_paper_fig.py}.  The quantities entering the ratio $r(\tau)$ of Lemma~\ref{lem:positivity}, the background pivots and the scattered field norms, are computed by \path{check_positivity_newconv.py} (results in \path{positivity_newconv.csv}); the same quantities, together with the diagonal of $\widehat{S}$, are recorded by \path{hc_paper_sweep.py}.

For section~\ref{subsec:noise-study}, the strong scattering figure is produced by \path{strong_contrast_fig.py}, which takes the sampling rate as an option and was run at $\Delta t_{\mathrm{sample}}=\tau/2$ for Figure~\ref{fig:strong-contrast}.  The forward arrays for the noise study are generated once by \path{make_noise_npz.py} and replayed by \path{noise_canonical_study.py}, which performs no further wave solves: the noise realizations, the two regularizations, and the summary and gallery figures all follow from the stored responses.  The floor sweep of Figure~\ref{fig:noise-floor} is produced by \path{floor_sweep.py} from the same arrays.  The repository README maps each figure to the exact commands.

\bibliographystyle{plain}
\bibliography{ROMbibliography,plasma_boundary_refs}

\end{document}